\DeclareMathOperator{\ch}{char}
\def\A{\mathbb{A}}
\def\F{\mathbb{F}}
\theoremstyle{plain}
\newtheorem{thm}{Theorem}
\newtheorem{Lem}[thm]{Lemma}
\newtheorem{Cor}[thm]{Corollary}
\newtheorem{proposition}[thm]{Proposition}
\newtheorem{Conj}[thm]{Conjecture}
\theoremstyle{remark}
\newtheorem{Rem}[thm]{Remark}
\newtheorem{Exa}[thm]{Example}
\renewcommand\footnotemark{}
\newcommand{\subjclass}[2][2010]{%
  \let\@oldtitle\@title%
  \gdef\@title{\@oldtitle\footnotetext{#1 \emph{Mathematics subject classification.} #2}}%
}
\newcommand{\keyywords}[1]{%
  \let\@@oldtitle\@title%
  \gdef\@title{\@@oldtitle\footnotetext{\emph{Key words and phrases.} #1.}}%
}
\begin{document}

\title{Factorization type probabilities of polynomials with prescribed coefficients over a finite field}
\subjclass{Primary 14G15; Secondary 12F10.}
\keyywords{Monodromy group, Chebotarev density theorem, generically \'etale morphism, factorization type, finite field}
\author{Kaloyan Slavov
\thanks{This research was
supported by NCCR SwissMAP of the SNSF.}
}

\maketitle

\begin{abstract}
Let $f(T)$ be a monic polynomial of degree $d$ with coefficients in a finite field $\F_q$. 
Extending earlier results in the literature, but now allowing $(q,2d)>1$, we give a criterion for $f$ to satisfy the following property: for all but $d^2-d-1$ values of $s$ in $\F_q$, the probability 
that $f(T)+sT+b$ is irreducible over $\F_q$ (as $b\in\F_q$ is chosen uniformly at random) is $1/d+O(q^{-1/2})$. 
\end{abstract}



\section{Introduction}

Fix a positive integer $d$. Gauss proved that the probability for a monic polynomial of degree $d$ with coefficients in a finite field $\F_q$ to be irreducible is $1/d+O_d(q^{-1/2})$.
In fact, for any partition $\lambda=(\lambda_i)_{i=1}^k$ of $d$, the probability for a random monic $\F_q$-polynomial of degree $d$ to have exactly $k$ irreducible factors over $\F_q$ of degrees $\lambda_1,...,\lambda_k$ (i.e., to have factorization type $\lambda$) is $p_\lambda+O_d(q^{-1/2}),$ where $p_\lambda$ is the probability that a permutation in $S_d$ has cycle structure $\lambda$. 

The setting in which some coefficients of the polynomial are fixed and the remaining ones vary in $\F_q$ has been studied extensively. For a monic polynomial $f(T)\in\F_q[T]$ of degree $d$ and an integer $m$ with $0\leq m<d,$ it is conventional to define the $m$-th ``short interval'' in 
$\F_q[T]$ around $f$ to be
\[I(f,m)=\{f(T)+a_mT^m+\dots+a_1T+a_0\ |\ a_0,...,a_m\in\F_q\}.\]
We are particularly interested in the small cases $m=0,1$. One naturally asks for assumptions on $f$ under which the following expected statement holds true:

\bigskip

\hspace*{-0.7cm}
\begin{minipage}[t]{0.1\linewidth}
(*)
\end{minipage}%
\begin{minipage}[t]{0.9\linewidth}
For any partition $\lambda$ of $d$, the probability for an element of $I(f,m)$ to have factorization type $\lambda$ is $p_\lambda+O_d(q^{-1/2})$. 
\end{minipage}

\bigskip

While a ``sufficiently general'' polynomial $f\in\F_q[T]$ will satisfy (*) with $m=1$ or even with $m=0$, one is interested in an explicit criterion that can be used to check that a specific $f$ satisfies (*).  Along these lines, Bank, Bary-Soroker, and Rosenzweig  (\cite{Duke}) prove the following 

\begin{thm} Let $f(T)\in\F_q[T]$ be a monic polynomial of degree $d$. Suppose $(q,d(d-1))=1$. Then $f$ satisfies (*) with $m=1$. 
\label{DukeThm}
\end{thm}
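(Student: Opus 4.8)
The plan is to deduce (*) from a computation of the geometric monodromy of the two-parameter family $f(T)+sT+t$, together with the standard ``function field Chebotarev'' mechanism (as in \cite{Duke}). Write $p:=\ch\F_q$, so that the hypothesis reads $p\nmid d(d-1)$; in particular $p\neq 2$, $\deg f'=d-1$ and $\deg f''=d-2$. Let $V=\A^2_{\F_q}$ with coordinates $(s,t)$, let $F(T)=f(T)+sT+t$, and let $\pi\colon X\to V$ be the degree-$d$ cover whose fiber over $(s,t)$ is the set of roots of $F$. Then $\pi$ is finite and flat (it is $\Spec$ of the free rank-$d$ algebra $\mathcal{O}_V[T]/(F)$, as $F$ is monic in $T$), $X\cong\A^2$ via $t=-f(T)-sT$, and $\pi$ is generically \'etale because $\operatorname{disc}_T F\not\equiv 0$ (take $s=0$: $\operatorname{disc}_T(f(T)+t)\not\equiv 0$ since $f'$ is a nonconstant polynomial). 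The heart of the matter is to prove that the geometric monodromy group $G\subseteq S_d$ of $\pi$ is all of $S_d$; then, since $G\subseteq G^{\mathrm{arith}}\subseteq S_d$, the arithmetic monodromy equals $S_d$ as well, and Deligne's equidistribution theorem gives, for each partition $\lambda$ of $d$, that $\#\{(s,t)\in\F_q^2: F\text{ has factorization type }\lambda\}=p_\lambda q^2+O_d(q^{3/2})$ --- the $O_d(q)$ points of the branch curve $B=\{\operatorname{disc}_T F=0\}$ being negligible --- and dividing by $q^2$ yields (*) with $m=1$. (For $d\le 2$ the statement is trivial, so assume $d\ge 3$.)

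To prove $G=S_d$ I would establish three properties. \emph{(i) $G$ is transitive}: $X\cong\A^2$ is irreducible and dominates $V$, so $\pi$ is connected. \emph{(ii) $G$ is $2$-transitive}: restrict $\pi$ to a vertical line $\{t=t_0\}$ with $t_0\neq -f(0)$; then $\{t=t_0\}\not\subseteq B$, so the monodromy of the restricted cover is a subgroup of $G$. That cover is $\{s=-(f(T)+t_0)/T\}$, which compactifies to a degree-$d$ map $\P^1_T\to\P^1_s$ whose fiber over $s=\infty$ is $\{T=0,\ T=\infty\}$ with ramification indices $1$ and $d-1$, both tame since $p\nmid d-1$; hence the inertia at $\infty$ is a $(d-1)$-cycle, so $G$ contains a $(d-1)$-cycle, and therefore the one-point stabilizer in $G$ is transitive on the remaining $d-1$ points --- i.e.\ $G$ is $2$-transitive. \emph{(iii) $G$ contains a transposition}: the branch curve $B$ is the image of $\psi\colon\A^1_T\to V$, $\theta\mapsto(-f'(\theta),\ \theta f'(\theta)-f(\theta))$, hence irreducible; assuming (see below) that $\psi$ is birational onto $B$, the member of the family over the generic point of $B$ is $f(T)-f'(\theta)(T-\theta)-f(\theta)$, which has a root of multiplicity exactly $2$ at $T=\theta$ (because $f''(\theta)\neq 0$, $\theta$ being transcendental over $\overline{\F}_q$) together with $d-2$ further, pairwise distinct roots (a second double root would contradict injectivity of $\psi$ at the generic point). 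As $p\neq 2$ this is tame ramification of index $2$, so $\operatorname{disc}_T F$ vanishes to order $1$ along $B$ and the inertia at the generic point of $B$ --- which lies in $G$ --- is a transposition. Finally, since $G$ is $2$-transitive, conjugating this transposition by elements of $G$ produces every transposition, so $G=S_d$.

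The one substantial point, and the step I expect to be the main obstacle, is the birationality of $\psi$ in (iii); equivalently, that the generic translate $g_m(T):=f(T)-mT$ has pairwise distinct critical values, equivalently that the affine plane curve $y=f(x)$ has only finitely many bitangent lines. I would prove it as follows. Let $m$ be transcendental over $\overline{\F}_q$ and suppose $g_m$ had two distinct critical points $\theta_1\neq\theta_2$ with $g_m(\theta_1)=g_m(\theta_2)$. Writing $m=f'(\theta)$ and $w:=f(\theta)-\theta f'(\theta)$ as functions of a critical point $\theta$, the chain rule gives $dw/d\theta=-\theta f''(\theta)$ and $dm/d\theta=f''(\theta)$, whence $dw/dm=-\theta$. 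Since $p\nmid d(d-1)$, $f'(T)-m$ is separable over $\overline{\F}_q(m)$, so each $\overline{\F}_q(\theta_i)/\overline{\F}_q(m)$ is separable and the derivation $d/dm$ extends to it uniquely; but $\psi(\theta_1)=\psi(\theta_2)$ forces the pair $(m,w)$ attached to $\theta_1$ to coincide with the one attached to $\theta_2$, so the identity $\theta_i=-dw/dm$ --- evaluated in the common subfield $\overline{\F}_q(m,w)$, with its unique extension of $d/dm$ --- yields $\theta_1=\theta_2$, a contradiction. (Both the nonvanishing of $f''$ at a transcendental and the separability used here rely on $p\nmid d(d-1)$; for $d=2$ there is a single critical point and the claim is vacuous.) This is the only place where the hypothesis $(q,d(d-1))=1$ is needed for more than routine tameness bookkeeping.
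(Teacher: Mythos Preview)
The paper does not give its own proof of this theorem; it is quoted as a result of Bank--Bary-Soroker--Rosenzweig \cite{Duke}. So there is no line-by-line comparison to make, but your argument is correct and it is worth contrasting it with the methodology the paper uses for its own results (Theorem~\ref{main} and the Remark after its proof).

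Both you and the paper reduce to showing that the geometric monodromy of the relevant family is $S_d$ and then invoke function-field Chebotarev. For \emph{2-transitivity}, the paper (via the Ballico--Hefez criterion, and explicitly in the final Remark for the two-parameter family) shows that $\Omega\times_{\A^2}\Omega$ minus the diagonal is isomorphic to $\A^1_t\times\A^1_t$ minus the diagonal, hence irreducible. You instead restrict to a line $\{t=t_0\}$ and read off a $(d-1)$-cycle from the tame ramification pattern $(1,d-1)$ over $s=\infty$; combined with transitivity this yields $2$-transitivity. Your route uses the hypothesis $p\nmid d-1$ in an essential way, while the fiber-square argument does not; on the other hand, yours avoids any appeal to an external irreducibility criterion.

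For the \emph{transposition}, the paper's mechanism (Lemma~\ref{adjust_poly_prescribed_fact_pattern}, or in the $(q,d(d-1))=1$ setting simply the Morse property from Proposition~\ref{PropKR}) produces a fiber with one double point and $d-2$ simple points by an explicit B\'ezout/critical-value count. You instead prove directly that the parametrization $\psi\colon\theta\mapsto(-f'(\theta),\,\theta f'(\theta)-f(\theta))$ of the branch curve is birational, via the derivation identity $dw/dm=-\theta$ (valid because $\overline{\F}_q(\theta)/\overline{\F}_q(m)$ is separable when $p\nmid d(d-1)$), which forces $\theta\in\overline{\F}_q(m,w)$. This is an elegant and self-contained replacement for the Morse/B\'ezout step, and it makes transparent exactly where the hypothesis $p\nmid d(d-1)$ enters: separability of $f'(T)-m$ and nonvanishing of $f''$ at a transcendental $\theta$.
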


A monic polynomial $f(T)\in\F_q[T]$ of degree $d$ is called a {\it Morse} polynomial if the equation $f'(T)=0$ has exactly $d-1$ distinct roots over $\overline{\F_q}$, and the values of $f$ at them are all distinct. 
For a Morse polynomial $f(T)$, (*) holds with $m=0$
(see \cite{book} or \cite{KR}). For $j\geq 0,$ the $j$-th Hasse derivative of a polynomial $f=\sum a_i T^i$ is defined as
\[D^jf=\sum \binom{i}{j}a_iT^{i-j},\]
so $f$ has a zero of order at least $k$ at $\alpha$ precisely when $D^jf(\alpha)=0$ for all $j=0,...,k-1$. 

The proposition below weakens the assumption $(q,d(d-1))=1$; it is stated as Proposition 7 in \cite{KR} and attributed to Jarden and Razon (Proposition 4.3 in \cite{book}).

\begin{proposition}
Let  $f(T)\in\F_q[T]$ be a monic polynomial of degree $d$. Suppose $f''\neq 0$ and $(q,2d)=1$. Then for all but $O_d(1)$ values of $s\in\F_q$, the polynomial $f(T)+sT$ is a Morse polynomial, and hence satisfies (*) with $m=0$. In particular, $f$ satisfies (*) with $m=1$.
\label{PropKR}
\end{proposition}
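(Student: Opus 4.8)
\smallskip
\noindent\emph{Proof strategy.}
Write $g_s(T)=f(T)+sT$, so that $g_s'(T)=f'(T)+s$; since $(q,d)=1$ the leading coefficient $d$ of $f'$ is a unit, so $g_s'$ has degree exactly $d-1$, and $g_s$ is a Morse polynomial precisely when (a) $f'(T)+s$ has no repeated root, and (b) the values of $g_s$ at the $d-1$ roots of $g_s'$ are pairwise distinct. The plan is to show that each of (a) and (b) fails for only $O_d(1)$ values of $s$. For (a): a repeated root $\gamma$ of $f'(T)+s$ satisfies $f''(\gamma)=0$ and $s=-f'(\gamma)$, so, since $f''\neq 0$, there are at most $\deg f''\le d-2$ such $\gamma$ and hence at most $d-2$ such $s$. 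For (b), pass to $k=\overline{\F_q}$ and form the plane curve
\[
Z \;=\; V(h)\cap V(G)\ \subseteq\ \A^2_k,\qquad
h=\frac{f'(\alpha)-f'(\beta)}{\alpha-\beta},\qquad
G=\frac{f(\alpha)-f(\beta)}{\alpha-\beta}-f'(\alpha),
\]
with $h,G\in k[\alpha,\beta]$ because $\alpha-\beta$ divides both $f'(\alpha)-f'(\beta)$ and $f(\alpha)-f(\beta)$. If $f'+s$ has distinct roots but two of them, $\gamma\neq\delta$, give the same critical value, then $f'(\gamma)=f'(\delta)=-s$ and $f(\gamma)-f(\delta)=-s(\gamma-\delta)=f'(\gamma)(\gamma-\delta)$, which is exactly the statement $(\gamma,\delta)\in Z$; hence every $s$ violating (b) has the form $-f'(\alpha_0)$ for some $(\alpha_0,\beta_0)\in Z$. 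So the whole proposition follows once one proves the \emph{Claim: the curve $Z$ is finite} --- equivalently, $h$ and $G$ are coprime in $k[\alpha,\beta]$.

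Granting the Claim, B\'ezout gives $|Z|\le(\deg h)(\deg G)\le(d-2)(d-1)$, so together with (a) at most $(d-2)d=O_d(1)$ values of $s\in\F_q$ are excluded; for every other $s$ the polynomial $f+sT$ is Morse, hence satisfies (*) with $m=0$ by \cite{book,KR}. For the last assertion, decompose $I(f,1)=\bigsqcup_{s\in\F_q}I(f+sT,0)$ into $q$ blocks of size $q$: all but $O_d(1)$ of them have factorization statistics $p_\lambda+O_d(q^{-1/2})$, and the $O_d(1)$ exceptional blocks alter the overall probabilities by only $O_d(q^{-1})$, so (*) holds with $m=1$.

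It remains to prove the Claim, which I regard as the crux. Note first that $h\not\equiv 0$ and $G\not\equiv 0$, since $h\equiv 0$ or $G\equiv 0$ would force $f'$ to be constant. Now argue by contradiction: suppose $Z$ has an irreducible component $W$ of dimension one, set $F=k(W)$ (a one-variable function field over the perfect field $k$), and let $\alpha,\beta\in F$ be the images of the coordinate functions. Then $W$ is not the diagonal, since $h(\alpha,\alpha)=f''(\alpha)\neq 0$; and $W$ is neither a line $\{\alpha=c\}$ nor a line $\{\beta=c\}$, since on such a line the vanishing of $h$ would make $f'$ constant, absurd as $\deg f'=d-1\ge 1$ (here $d\ge 2$ because $f''\neq 0$). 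Hence $\alpha,\beta$ are non-constant in $F$ and $\alpha\neq\beta$; moreover $h|_W=0$ and $G|_W=0$ give, in $F$,
\[
f'(\alpha)=f'(\beta)\qquad\text{and}\qquad f(\alpha)-f(\beta)=f'(\beta)\,(\alpha-\beta),
\]
while, $f''$ being a nonzero polynomial and $\alpha,\beta$ non-constant, $f''(\alpha)\neq 0$ and $f''(\beta)\neq 0$ in $F$. The decisive step --- which I expect to be the main obstacle --- is to differentiate these relations. Since $k$ is perfect, $F/k$ is separably generated; fix a separating element $t\in F$ and let $D\colon F\to F$ be the unique $k$-derivation with $D(t)=1$. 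Applying $D$ to the second relation and simplifying by means of the first leaves $f''(\beta)(\alpha-\beta)\,D(\beta)=0$, hence $D(\beta)=0$; applying $D$ to the first relation and using $D(\beta)=0$ leaves $f''(\alpha)\,D(\alpha)=0$, hence $D(\alpha)=0$. But $\alpha$ and $\beta$ generate $F$ over $k$, so a $k$-derivation killing both vanishes identically --- contradicting $D(t)=1$. (In characteristic zero the same computation gives $f''(\beta)=0$ outright, contradicting $f''\neq 0$; the characteristic-$p$ point is precisely that one must first pass to a separating transcendence element so that the derivation $D$ is available on $F$.) This proves the Claim, and with it the proposition.
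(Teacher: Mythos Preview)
The paper does not prove this proposition; it quotes it from Jarden--Razon (\cite{book}) and \cite{KR}. Your argument is correct and self-contained, so there is nothing to compare against within the present paper.

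What is worth noting is that your Claim proves more than you advertise. In the paper's notation your $h$ is $\widetilde{f'}(x,y)$ and your $G$ is $\widetilde{f}(x,y)-f'(x)$, and you show that they are coprime assuming only $f''\neq 0$; the argument uses nothing about $q$ or $d$ beyond that (the hypothesis $(q,d)=1$ enters only earlier, to force $\deg f'=d-1$). Since your derivation argument requires only that the ground field be perfect, and coprimality of two fixed polynomials is unaffected by field extension, you have in fact established Conjecture~8 of the paper in full generality. In particular, the gcd hypothesis in Theorem~\ref{main} is automatic whenever $f''\neq 0$, confirming the authors' expectation that Theorem~\ref{main} strictly strengthens Proposition~\ref{PropKR}. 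One minor quibble: the justification ``$G\equiv 0$ would force $f'$ to be constant'' is correct but deserves a word---it follows because $\widetilde{f}$ is symmetric in $\alpha,\beta$, so $G\equiv 0$ gives $f'(\alpha)=\widetilde f(\alpha,\beta)=\widetilde f(\beta,\alpha)=f'(\beta)$ identically.
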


\begin{Rem} 
The assumption $(q,2d)=1$ is essential in Proposition \ref{PropKR}. Indeed, if $\ch \F_q\mid d$, a polynomial of degree $d$ is never a Morse polynomial. Also, even if the condition $f''\neq 0$ is replaced by the weaker $D^2f\neq 0$ (see the paragraph preceding Proposition 7 in  \cite{KR}), Proposition \ref{PropKR} still does not hold in characteristic $2$. For example, $T^7+sT$ is never a Morse polynomial when $q$ is a power of $2$, and in fact $f(T)=T^7$ fails to satisfy (*) with $m=1$. 
\end{Rem}

The goal of this note is to give a criterion for a polynomial to satisfy (*), but allowing 
$(q,2d)>1$. 

For a field $k$ and a polynomial $f(T)\in k[T]$, let $\widetilde{f}(x,y)$ denote the polynomial in 
$k[x,y]$ defined by
\[f(x)-f(y)=(x-y)\widetilde{f}(x,y).\]

We now state our main result. 

\begin{thm}
Let $f(T)\in\F_q[T]$ be a monic polynomial of degree $d$. Suppose 
$D^2f\neq 0$, $\deg f'\geq 1$, and
the polynomials $\widetilde{f}(x,y)-f'(x)$ and $\widetilde{f'}(x,y)$ have no common factors besides possibly a power of $x-y$.
  Then for all but 
$d^2-d-1$ values of $s\in\F_q$, the polynomial $f(T)+sT$ satisfies (*) with $m=0.$ 
\label{main}
\end{thm}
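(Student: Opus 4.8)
The plan is to realize $f+s_0T$ inside a one‑parameter family over the $b$-line and to control the monodromy of the resulting cover. For fixed $s_0\in\F_q$ the factorization type of $f(T)+s_0T+b$, as $b$ ranges over $\F_q$, is the cycle type of the Frobenius substitution on the fiber of the degree-$d$ cover $\pi_{s_0}\colon\A^1_T\to\A^1_b$, $T\mapsto-(f(T)+s_0T)$; this cover is separable exactly when $f'+s_0\neq0$, which fails for at most one value of $s_0$ because $\deg f'\geq1$. Hence, by the Chebotarev density theorem together with the Lang--Weil bounds --- applied exactly as in the proofs of Theorem~\ref{DukeThm} and Proposition~\ref{PropKR} --- $f+s_0T$ satisfies (*) with $m=0$ as soon as the geometric and arithmetic monodromy groups of $\pi_{s_0}$ both equal $S_d$; and since $G_{\mathrm{geom}}\trianglelefteq G_{\mathrm{arith}}\leq S_d$ it is enough that $G_{\mathrm{geom}}=S_d$. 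Treating $s$ as an indeterminate, I will first show that the geometric monodromy group of the pencil $f(T)+sT+b$ over $\overline{\F_q(s)}(b)$ equals $S_d$, and then bound by $d^2-d-1$ the set of exceptional specializations $s_0\in\F_q$.

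Write $G$ for the generic geometric monodromy group and $h_s:=f+sT$. Transitivity of $G$ is immediate, as $f(T)+sT+b$ is linear in $b$ and hence irreducible over $\overline{\F_q(s)}(b)$. For $2$-transitivity, form the fiber square of $\pi_s$ over $\A^1_b$; the identity $f(x)+sx-f(y)-sy=(x-y)\bigl(\widetilde f(x,y)+s\bigr)$ shows that the diagonal splits off and the complementary curve is $C_s\colon\widetilde f(x,y)+s=0$ in $\A^2_{x,y}$, so $2$-transitivity of $G$ amounts to geometric irreducibility of $C_s$ over $\overline{\F_q(s)}$. By the usual analysis this fails only if the generic fiber of $\widetilde f\colon\A^2\to\A^1$ is geometrically reducible, i.e.\ if $\widetilde f$ is a composite $\phi(\psi(x,y))$ with $\deg\phi\geq2$; comparing the top-degree forms of $\widetilde f$ and $(x^d-y^d)/(x-y)$ shows this can happen only when $d$ is a power of $\ch\F_q$, and in that range the hypotheses $\deg f'\geq1$ and $D^2f\neq0$ rule it out (a composition would force $f'$, and then $f$ itself, into a shape excluded by these conditions). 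Thus $G$ is $2$-transitive, in particular primitive.

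The heart of the argument is producing a transposition in $G$. The coprimality hypothesis forces the critical values of $h_s$ to be pairwise distinct for all but finitely many $s$: two distinct critical points $x\neq y$ of $h_s$ take the same value precisely when $\widetilde f(x,y)=f'(x)$ and $\widetilde{f'}(x,y)=0$, and by assumption the curves $\{\widetilde f(x,y)-f'(x)=0\}$ and $\{\widetilde{f'}(x,y)=0\}$ meet in only finitely many points off the diagonal, each yielding the single value $s=-f'(x)$. For such a generic $s$, every finite branch point of $h_s$ lies under exactly one critical point $\tau$; the crucial local computation --- the place where $D^2f\neq0$ and $\deg f'\geq1$ enter --- is that for all but finitely many $s$ this $\tau$ has ramification index exactly $2$ in $h_s$. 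When $\ch\F_q\nmid 2d$ this says $h_s$ has a simple critical point with tame ramification of index $2$, i.e.\ $h_s$ is a Morse polynomial, and one may alternatively invoke \cite{book},\cite{KR}; when $\ch\F_q\mid 2d$ the point $\tau$ is instead \emph{wildly} ramified of index $2$, with inertia $\Z/2$, and this is the non-Morse mechanism that the hypotheses are arranged to guarantee (it also explains why $f=T^7$, for which the coprimality fails and whose generic $h_s$ has inertia image contained in $A_7$, is excluded). In either case the inertia generator at such a branch point acts on the fiber of $\pi_s$ as a transposition, so $G$ is a primitive permutation group containing a transposition and therefore, by Jordan's theorem, $G=S_d$. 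Consequently $G_{\mathrm{geom}}=G_{\mathrm{arith}}=S_d$.

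It remains to bound the exceptional locus in $\A^1_s$. For $s_0$ outside a proper closed subscheme the fiber $\pi_{s_0}$ still has geometric, hence arithmetic, monodromy $S_d$; the bad values of $s_0$ are those where $f'+s_0=0$, where $C_{s_0}$ degenerates, where $h_{s_0}$ acquires two equal critical values, or where the index-$2$ ramification degenerates, each cut out by the vanishing of an explicit polynomial in $s$. A degree count --- dominated by the off-diagonal intersection number of $\widetilde f(x,y)-f'(x)$ with $\widetilde{f'}(x,y)$ together with the discriminant of $f'+s$ in $T$ --- shows that the exceptional set has at most $d^2-d-1$ elements, completing the proof. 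The step I expect to be the main obstacle is the local analysis of the previous paragraph in characteristic $2$ (more generally when $\ch\F_q\mid 2d$): exhibiting a critical point of $h_s$ that is wildly ramified of index exactly $2$, so that it contributes an honest transposition to $G$, has no Morse-theoretic analogue and is exactly where the hypotheses $D^2f\neq0$ and the coprimality condition do their work. Pinning the exceptional count down to the sharp value $d^2-d-1$, rather than an unspecified $O_d(1)$, is a second, more bookkeeping-heavy, point.
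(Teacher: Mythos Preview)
Your overall architecture matches the paper's: set up the degree-$d$ cover $\A^1_T\to\A^1_b$, prove its geometric monodromy is $S_d$ for $s$ outside an explicit finite set, and invoke the function-field Chebotarev theorem. The two ingredients for $S_d$---geometric irreducibility of $\widetilde f(x,y)+s$ for $2$-transitivity, and a fiber with one double point and $d-2$ simple points for a transposition---are also the paper's.

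The difference lies in how the transposition is extracted, and this dissolves precisely the obstacle you flag. You try to read a transposition off the inertia at a critical value and worry (correctly) that in characteristic $2$ a wildly ramified point of index $2$ need not have inertia image $\Z/2$ inside $S_d$. The paper sidesteps this entirely by quoting Propositions~2 and~3 of Ballico--Hefez \cite{Ballico_Hefez}, a characteristic-free criterion: if the fiber-square minus the diagonal is geometrically irreducible \emph{and} some geometric fiber has exactly $d-1$ points with the map \'etale at $d-2$ of them, then the geometric monodromy is $S_d$. No local inertia computation is required; only the combinatorial shape of a single fiber. The existence of such a fiber for good $s$ is Lemma~\ref{adjust_poly_prescribed_fact_pattern}, whose proof is the argument you sketch: rule out triple roots via $D^2f\neq 0$ (at most $d-2$ bad $s$), and rule out two distinct double roots via the off-diagonal intersection of $\widetilde f(x,y)-f'(x)$ with $\widetilde{f'}(x,y)$ (at most $(d-1)(d-2)$ bad $s$ by B\'ezout). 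So the ``non-Morse mechanism'' you anticipate is not an inertia statement at all.

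This also yields the sharp count at once. Irreducibility of $\widetilde f(x,y)+s$ fails for at most $d-1$ values of $s$ (Lemma~\ref{perturb}, proved in \cite{Slavov_Kakeya} via Lorenzini \cite{DL}; the hypothesis needed is that $f$ is not affine linearized, which is immediate from $\deg f'\geq 1$), and the fiber-shape condition fails for at most $(d-2)+(d-1)(d-2)=d^2-2d$ values. The union has size at most $d^2-d-1$. Your ``discriminant of $f'+s$ in $T$'' plays no role, and separability of the cover holds for \emph{every} $s$ once $\deg f'\geq 1$, since $f'(T)+s$ then has positive degree and is never identically zero.
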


\begin{Cor} Let $f(T)\in\F_q[T]$ be a polynomial as in Theorem \ref{main}. Then $f$ satisfies (*) with $m=1$. 
\label{mainCor}
\end{Cor}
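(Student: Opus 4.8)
The plan is to exploit the obvious disjoint union
\[
I(f,1)=\bigsqcup_{s\in\F_q}I(f+sT,0),
\]
which holds because $f(T)+a_1T+a_0=f(T)+sT+b$ forces $s=a_1$, $b=a_0$, and to average the factorization-type counts over $s$. Fix a partition $\lambda$ of $d$ and let $N(g)$ be $1$ if $g$ has factorization type $\lambda$ and $0$ otherwise. Writing $P_\lambda(s)=\frac1q\sum_{b\in\F_q}N(f+sT+b)$ for the probability that a uniformly random element of the short interval $I(f+sT,0)$ has type $\lambda$, the displayed partition gives
\[
\frac{1}{q^2}\sum_{g\in I(f,1)}N(g)=\frac{1}{q}\sum_{s\in\F_q}P_\lambda(s),
\]
i.e.\ the probability in question is the average of $P_\lambda(s)$ over $s\in\F_q$.

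Next I would split the sum over $s$ according to Theorem \ref{main}. Call $s$ \emph{good} if $f+sT$ satisfies (*) with $m=0$, and \emph{bad} otherwise, and let $B\subseteq\F_q$ be the set of bad $s$, so $\#B\le d^2-d-1=O_d(1)$ by the theorem. For every good $s$ we have $P_\lambda(s)=p_\lambda+O_d(q^{-1/2})$, with an implied constant depending only on $d$ and in particular uniform in $s$; for bad $s$ we use only the trivial bound $0\le P_\lambda(s)\le 1$. Then
\[
\frac{1}{q}\sum_{s\in\F_q}P_\lambda(s)=\frac{1}{q}\sum_{s\notin B}\bigl(p_\lambda+O_d(q^{-1/2})\bigr)+\frac{1}{q}\sum_{s\in B}P_\lambda(s)=p_\lambda\cdot\frac{q-\#B}{q}+O_d(q^{-1/2})+O_d(q^{-1}),
\]
since the first sum has $q-\#B\le q$ terms and the second has $\#B=O_d(1)$ terms each of size at most $1$. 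As $p_\lambda\le 1$ and $\#B=O_d(1)$, the main term equals $p_\lambda+O_d(q^{-1})$, so the whole expression is $p_\lambda+O_d(q^{-1/2})$. Since $\lambda$ was an arbitrary partition of $d$, this is exactly (*) with $m=1$.

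I do not expect a genuine obstacle here: the argument is the standard "disjoint union of short intervals" averaging. The only points to be careful about are that the error term in (*) with $m=0$ is uniform in $s$ — so that summing $q$ such bounds and dividing by $q$ leaves the $O_d(q^{-1/2})$ intact — and that the exceptional set has size $O_d(1)$ independent of $q$, so its contribution is absorbed into $O_d(q^{-1})$; both are immediate from the shape of the conclusion of Theorem \ref{main}.
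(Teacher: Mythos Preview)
Your argument is correct and is exactly the approach the paper has in mind: the corollary is stated immediately after Theorem~\ref{main} with no separate proof, the deduction being the straightforward averaging over $s\in\F_q$ that you have written out in full. The paper does add, in a closing remark, an alternative route that bypasses Theorem~\ref{main} by applying the Ballico--Hefez monodromy criterion directly to the two-parameter family $\varphi\colon\Omega\to\A^2_{s,b}$, but your deduction from Theorem~\ref{main} is the intended one.
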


When $q$ is odd, Corollary \ref{mainCor} also follows from Corollary 1.4 in \cite{Entin}.

\begin{Exa}
Theorem \ref{main} and Corollary \ref{mainCor} apply to
$f(T)=T^{12}+T^3\in\F_q[T]$ with $q$ a power of $2$; the gcd of $\widetilde{f}(x,y)-f'(x)$ and 
$\widetilde{f'}(x,y)$ is $x-y$. 
\end{Exa}

\begin{Rem}
The statements of Theorem \ref{main} and Corollary \ref{mainCor} would be false if one drops the gcd assumption.  A counterexample is $f(T)=T^7$ in characteristic $2$. Thus Theorem \ref{main} here corrects the false Theorem 1.3 in our previous version \cite{ActaAR_wrong} of this paper. 
\end{Rem}

To apply Theorem \ref{main} to a specific polynomial, one has to compute the greatest common divisor of the two polynomials that appear in the statement; this task is computationally easy. In fact, based on modest numerical evidence, we state the following

\begin{Conj}
Let $k$ be a field and let $f\in k[T]$. Suppose $f''\neq 0$. Then the polynomials 
$\widetilde{f}(x,y)-f'(x)$ and $\widetilde{f'}(x,y)$ in $k[x,y]$ have no common factors. 
\end{Conj}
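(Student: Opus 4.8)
The plan is to translate the coprimality statement into a transversality statement about the graph of $f$ and then prove it by differentiating along a hypothetical common component. First I would record a divided-difference identity. The Hasse-derivative expansion $f(y)=\sum_{j\ge 0}D^jf(x)(y-x)^j$ together with $D^1f=f'$ gives
\[f(y)-f(x)-(y-x)f'(x)=(y-x)^2g(x,y),\qquad g(x,y):=\sum_{j\ge 2}D^jf(x)(y-x)^{j-2},\]
hence $\widetilde f(x,y)-f'(x)=(y-x)\,g(x,y)$ with $g(x,x)=D^2f(x)$. Since $f''=2D^2f$, the hypothesis $f''\ne 0$ forces $\ch k\ne 2$ and $D^2f\ne 0$, so $y-x$ divides neither $g$ (because $g(x,x)=D^2f\ne 0$) nor $\widetilde{f'}$ (because $\widetilde{f'}(x,x)=f''\ne 0$). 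Thus a common factor of $\widetilde f(x,y)-f'(x)$ and $\widetilde{f'}$ cannot be $y-x$, and the conjecture reduces to showing that $g$ and $\widetilde{f'}$ are coprime in $k[x,y]$. As any common factor over $k$ is also one over $\overline k$, I may extend scalars and assume $k=\overline k$.

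Next, suppose for contradiction that $V(g)\cap V(\widetilde{f'})$ has a one-dimensional component $C$. Since $y-x\nmid g$, the curve $C$ is not the diagonal, and it is neither vertical nor horizontal, for $\widetilde{f'}(c,y)\equiv 0$ would make $f'$ constant, against $f''\ne 0$. On $C$ the relations
\[f(y)-f(x)-(y-x)f'(x)=0\qquad\text{and}\qquad f'(x)=f'(y)\]
hold (the first since $g=0$ and $(y-x)^2\ne 0$ there, the second since $\widetilde{f'}=0$ and $x\ne y$ there); geometrically they say that the tangent to the graph of $f$ at $x$ is again tangent at $y$, so $C$ would be a one-parameter family of bitangents. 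As $k$ is perfect, pick a separating transcendence element $t$ of $k(C)/k$ and let $D=d/dt$ be the induced derivation of $k(C)$. Differentiating the first relation and using $f'(x)=f'(y)$ annihilates the $Dy$-term and leaves $(y-x)f''(x)\,Dx=0$; differentiating the companion relation $f(x)-f(y)+(y-x)f'(y)=0$ (which also holds on $C$) gives $(y-x)f''(y)\,Dy=0$.

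The crux, and the only genuinely positive-characteristic point, is to rule out $Dx=Dy=0$. For the nonzero derivation $D$ one has $\ker D=k(C)^p$ (since $k(C)^p\subseteq\ker D\subsetneq k(C)$ and $[k(C):k(C)^p]=p$ is prime), so $Dx=Dy=0$ would force $k(C)=k(x,y)\subseteq k(C)^p$, which is impossible. Hence $Dx\ne 0$ or $Dy\ne 0$, and as $y-x\ne 0$ in $k(C)$ we conclude $f''(x)=0$ or $f''(y)=0$ in $k(C)$. Since $x$ and $y$ are transcendental over $k$, either equality forces $f''=0$ in $k[T]$, contradicting the hypothesis. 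Therefore no such $C$ exists, $g$ and $\widetilde{f'}$ are coprime, and the conjecture follows. I expect the main obstacle to be exactly this characteristic-$p$ bookkeeping around $D$, namely ensuring that differentiation along $C$ is legitimate and that $Dx,Dy$ do not both vanish; in characteristic $0$ one simply has $\ker D=k$ and the argument is immediate.
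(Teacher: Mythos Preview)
The paper does not prove this statement at all: it is presented as a \emph{conjecture}, justified only by ``modest numerical evidence.'' There is therefore no paper proof to compare against; you are attempting to settle what the author leaves open.

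As far as I can see, your argument is correct and complete. The reduction via the Hasse--Taylor expansion to coprimality of $g(x,y)=\sum_{j\ge 2}D^jf(x)(y-x)^{j-2}$ and $\widetilde{f'}$ is clean, and correctly uses $f''=2D^2f\neq 0$ to exclude characteristic~$2$ and to ensure that neither polynomial is divisible by $y-x$. On a hypothetical common irreducible component $C$ (over $\overline k$) the two relations $f(y)-f(x)=(y-x)f'(x)$ and $f'(x)=f'(y)$ hold in $k(C)$, and applying the derivation $D=d/dt$ for a separating element $t$ gives, via the chain rule (valid for any derivation in any characteristic), exactly $(y-x)f''(x)\,Dx=0$ and $(y-x)f''(y)\,Dy=0$. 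Your exclusion of vertical and horizontal components via $\widetilde{f'}$ guarantees that both $x$ and $y$ are transcendental over $k$, so either equality $f''(x)=0$ or $f''(y)=0$ in $k(C)$ forces $f''=0$ in $k[T]$. The only delicate point is ruling out $Dx=Dy=0$ in characteristic $p$, and your argument there is sound: for a function field $K$ in one variable over a perfect field one has $[K:K^p]=p$, hence $\ker D=K^p$, and $x,y\in K^p$ would give $K=k(x,y)\subseteq K^p$, a contradiction.

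If this holds up under careful write-up (I do not see a gap), you have resolved the conjecture. It would be worth spelling out the standard facts $[k(C):k(C)^p]=p$ and $\ker(d/dt)=k(C)^p$ with a reference, and then communicating the result to the author.
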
 

In other words, we conjecture that the assumptions in Theorem \ref{main} not only cover further examples when $q$ is a power of $2$ or $\ch{\F_q}\mid d$ but are actually strictly weaker than the assumptions in Proposition \ref{PropKR}.

The proof of Theorem \ref{main} is based on the technique employed by Entin in a variety of problems solved in \cite{Entin}, with an extra ingredient  
(Lemma \ref{perturb} below) developed by the author in an earlier work, concerning the irreducibility of the perturbations of a certain curve. Namely, for $s\in\F_q,$ we set up a  generically \'etale map $\varphi_s\colon\A^1\to\A^1$ of degree $d$ such that for any $b\in\A^1(\F_q)$ with $d$ preimages over 
$\overline{\F_q}$, the conjugacy class in $S_d$ that the action of the Frobenius $\text{Fr}_q$
on $\varphi_s^{-1}(b)$ gives rise to has 
cycle structure corresponding to the factorization type
of the polynomial $f(T)+sT+b$ in $\F_q[T]$. The statement will then follow by the Chebotarev density theorem for function fields, once we show that the monodromy group of $\varphi_s$ is the full symmetric group $S_d$. 
To this end, we check the criterion proven in \cite{Ballico_Hefez}.

\section{The proof}

We say that a polynomial $f(T)\in\overline{\F_q}[T]$ is ``affine linearized'' if it has the form 
$f(T)=\sum a_i T^{p^i}+f(0)$, where $p=\text{char}\,\F_q$.

\begin{Lem}
Let $f(T)\in\F_q[T]$ be a polynomial of degree $d$, which is not affine linearized.
For all but at most $d-1$ values of 
$s\in\overline{\F_q}$, the polynomial 
$\widetilde{f}(x,y)+s$ is geometrically irreducible. 
\label{perturb}
\end{Lem}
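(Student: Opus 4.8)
The plan is to study the curve $C_s \colon \widetilde f(x,y)+s = 0$ in $\mathbb{A}^2$ and understand how its geometry varies with $s$. First I would record the basic structure: $\widetilde f(x,y)$ is a polynomial of degree $d-1$ in each of $x$ and $y$, symmetric in $x$ and $y$, and its zero locus $C_0$ is the ``difference curve'' $\{f(x)=f(y)\}$ minus the diagonal $x=y$. Writing $F_s(x,y) = \widetilde f(x,y)+s$, I want to show that $F_s$ is geometrically irreducible for all but at most $d-1$ values of $s \in \overline{\mathbb{F}_q}$. The natural device is to view the family $\{C_s\}$ as the fibers of the morphism $\psi \colon \mathbb{A}^2 \to \mathbb{A}^1$, $(x,y)\mapsto -\widetilde f(x,y)$, so that $C_s = \psi^{-1}(s)$, and to invoke a Bertini-type irreducibility theorem for pencils: if the generic fiber of $\psi$ is geometrically irreducible, then all but finitely many fibers are, and one can bound the number of exceptional fibers. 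So the two things to pin down are (a) geometric irreducibility of the generic fiber, and (b) an explicit bound of $d-1$ on the number of exceptions.

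For (a), I would argue by contradiction: if the generic fiber of $\psi$ were reducible (or non-geometrically-irreducible), then $\overline{\mathbb{F}_q}(x,y)$ would not be the full function field obtained by adjoining $\widetilde f(x,y)$ over $\overline{\mathbb{F}_q}(\widetilde f)$ in the appropriate sense; more concretely, $\widetilde f(x,y) - \widetilde f(x',y')$ (or a suitable specialization) would factor nontrivially, or $\widetilde f$ would be expressible as a nontrivial composition $h(g(x,y))$ with $\deg h \geq 2$. The hypothesis that enters here is precisely that $f$ is \emph{not} affine linearized. I would show: if $\widetilde f(x,y)$ can be written as $h(g(x,y))$ with $h \in \overline{\mathbb{F}_q}[t]$ of degree $\geq 2$ — equivalently if the pencil $\{C_s\}$ is ``composite'' — then one can reconstruct from this a constraint on $f$ forcing $f$ to be affine linearized (up to the obvious additive constant), contradicting the hypothesis. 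Concretely, specializing $y$ to a constant $c$ shows $f(x) - f(c)$, divided by $x-c$, factors through $g(x,c)$; varying $c$ and comparing these factorizations should force $f' $, hence $f$, to have the linearized shape $\sum a_i T^{p^i}$. This is the step I expect to be the main obstacle: isolating exactly why ``not affine linearized'' is the right hypothesis and turning the composite-pencil scenario into the linearized conclusion cleanly, rather than merely gesturing at it.

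For (b), once the generic fiber is geometrically irreducible, the number of $s$ for which $C_s$ fails to be geometrically irreducible is controlled by the geometry of the total space. The reducible or non-reduced fibers of $\psi \colon \mathbb{A}^2 \to \mathbb{A}^1$ occur over the branch locus together with finitely many further points, and a degree/Euler-characteristic count — or a direct application of a quantitative Bertini theorem in pencils — gives the bound. Here I would use that $\widetilde f$ has degree $d-1$: the pencil spanned by $\widetilde f$ and the constants has degree $d-1$, its base locus at infinity has the expected structure, and the standard bound on exceptional members of such a pencil is one less than the degree, i.e. $d-2$ or $d-1$. I would aim the estimate at the clean value $d-1$ as stated, absorbing any off-by-one into the crudeness of the bound. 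The key inputs I expect to cite are: the symmetry and bidegree of $\widetilde f$, the characterization of affine linearized polynomials as exactly those $f$ for which $\widetilde f$ becomes a polynomial in a single additive-type form, and a quantitative irreducibility-in-pencils statement (e.g. of the kind appearing in the work on Bertini theorems over arbitrary fields).
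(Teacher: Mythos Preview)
Your proposal is correct and follows essentially the same route as the paper: first show that $\widetilde{f}(x,y)$ cannot be written as $Q(h(x,y))$ with $\deg Q\geq 2$ (the paper does this by an elementary undetermined-coefficients argument, citing \cite{Slavov_Kakeya}), and then invoke a quantitative irreducibility-in-pencils theorem---the paper cites Lorenzini \cite{DL} for exactly this step, which delivers both the generic geometric irreducibility and the bound $d-1$ on the number of exceptional $s$ in one stroke.
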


\begin{proof}
The author has proven this as Lemma 19 in \cite{Slavov_Kakeya}. We sketch the proof here as well. First, an elementary undetermined coefficients argument shows that if $f$ is not affine linearized, the polynomial $\widetilde{f}(x,y)$ cannot be written as $Q(h(x,y))$ for a polynomial $Q$ with $\deg Q>1$. Then we apply the main result of \cite{DL}. 
\end{proof}

\begin{Lem}
Let $f\in\F_q[T]$ be a polynomial that satisfies the hypotheses of Theorem \ref{main}. 
  Then for all but 
$d^2-2d$ values of $s\in\overline{\F_q}$, there exists a $b\in\overline{\F_q}$ such that the polynomial $f(T)+sT+b$ has a unique root of multiplicity $2$ and $d-2$ simple roots over $\overline{\F_q}$. 
\label{adjust_poly_prescribed_fact_pattern}
\end{Lem}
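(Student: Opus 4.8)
The plan is to translate the desired factorization pattern of $f(T)+sT+b$ into a statement about the critical points of $g_s(T):=f(T)+sT$, and then to show that the set of "bad" $s$ is finite with cardinality pinned down by a Bézout estimate. First I would record the elementary observation that $\gamma\in\overline{\F_q}$ is a root of $f(T)+sT+b$ of multiplicity $\geq 2$ precisely when $f'(\gamma)+s=0$ and $b=-(f(\gamma)+s\gamma)$, and that such a $\gamma$ has multiplicity exactly $2$ iff $D^2f(\gamma)\neq 0$ (using $D^2(f+sT+b)=D^2f$). Consequently, a $b$ with the desired property exists iff $f'+s$ has a root $\gamma$ with $D^2f(\gamma)\neq 0$ such that no other root $\gamma'$ of $f'+s$ satisfies $f(\gamma')+s\gamma'=f(\gamma)+s\gamma$; call such a $\gamma$ \emph{admissible for} $s$, and note that since $\deg f'\geq 1$ the polynomial $f'+s$ always has at least one root, so the reformulation is meaningful.

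Next I would analyze when two distinct roots $\gamma,\gamma'$ of $f'+s$ obstruct admissibility, i.e.\ satisfy $f(\gamma)+s\gamma=f(\gamma')+s\gamma'$. Since $f'(\gamma)=f'(\gamma')=-s$, dividing the relations $f(\gamma)-f(\gamma')=(\gamma-\gamma')\widetilde f(\gamma,\gamma')$ and $f'(\gamma)-f'(\gamma')=(\gamma-\gamma')\widetilde{f'}(\gamma,\gamma')$ by $\gamma-\gamma'\neq 0$ turns the collision condition into $\widetilde f(\gamma,\gamma')=f'(\gamma)$ together with $\widetilde{f'}(\gamma,\gamma')=0$. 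Thus such "colliding pairs" are exactly the points of $V(\widetilde{f'})\cap V(\widetilde f - f')$ lying off the diagonal. Here the hypothesis of Theorem~\ref{main} enters: after stripping from $\widetilde{f'}$ and from $\widetilde f-f'$ all factors of $x-y$ (note that $x-y$ always divides $\widetilde f-f'$, since $\widetilde f(x,x)=f'(x)$, and that $x-y$ may also divide $\widetilde{f'}$, e.g.\ in characteristic $2$), the two remaining polynomials are coprime, so the off-diagonal part of $V(\widetilde{f'})\cap V(\widetilde f-f')$ is a finite set $\mathcal C$ with $\#\mathcal C\leq\deg\widetilde{f'}\cdot\deg(\widetilde f-f')$.

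I would then conclude by counting. If $s$ is bad, then no root of $f'+s$ is admissible, so in particular at least one root $\gamma$ satisfies $D^2f(\gamma)=0$ or $(\gamma,\gamma')\in\mathcal C$ for some root $\gamma'$ of $f'+s$; in either case $s=-f'(\gamma)$. Hence the set of bad $s$ is contained in $\{\,-f'(\gamma):D^2f(\gamma)=0\,\}\cup\{\,-f'(\gamma):\gamma\text{ is a coordinate of a point of }\mathcal C\,\}$. The first set has at most $\deg D^2f\leq d-2$ elements (finitely many, as $D^2f\neq 0$). For the second, $\mathcal C$ is symmetric under swapping coordinates, so the number of coordinates occurring is at most $\#\mathcal C$; and $\deg\widetilde{f'}=\deg f'-1\leq d-2$ while $\deg(\widetilde f-f')=d-1$ (the monomial $y^{d-1}$ of $\widetilde f$ survives, as $f'$ involves only $x$), giving $\#\mathcal C\leq(d-2)(d-1)$. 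Altogether the number of bad $s$ is at most $(d-2)+(d-1)(d-2)=d^2-2d$, as claimed.

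The step I expect to require the most care is not any single calculation but arranging the constant to be exactly $d^2-2d$, which relies on the two features the hypothesis of Theorem~\ref{main} is built to accommodate: one must pass to the $(x-y)$-free parts of $\widetilde{f'}$ and $\widetilde f-f'$ before invoking Bézout so that $\mathcal C$ is genuinely finite (even in characteristic $2$, or when $f'$ is inseparable), and the degree bounds $\deg\widetilde{f'}\leq d-2$ and $\deg(\widetilde f-f')=d-1$ must be combined with the $\leq d-2$ contribution of $D^2f$ so as to land precisely on $d(d-2)$. One also has to keep the other two hypotheses in view: $\deg f'\geq 1$ ensures $f'+s$ has roots at all (it fails, e.g., for $f=T^6$ in characteristic $2$, where $f'=0$), and $D^2f\neq 0$ ensures both that a multiplicity-$2$ root can be arranged and that the first set above is finite.
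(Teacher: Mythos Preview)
Your argument is correct and follows essentially the same route as the paper: both proofs split the bad $s$ into those coming from a potential triple root (the set $\{-f'(\gamma):D^2f(\gamma)=0\}$, of size at most $d-2$) and those coming from two distinct double roots (encoded by off-diagonal points of $V(\widetilde f-f')\cap V(\widetilde{f'})$, bounded by $(d-1)(d-2)$ via B\'ezout), and then pick any root of $f'+s$ to produce the required $b$. Your write-up is in fact a bit more careful than the paper's in one respect: you make explicit that one must strip the $(x-y)$-factors before invoking B\'ezout (the paper states the $(d-1)(d-2)$ bound directly and leaves this step implicit), and you justify the symmetry of $\mathcal C$ needed to pass from points to first coordinates.
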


\begin{proof} Let
\[B_1=\{-f'(\alpha)\ |\ D^2f(\alpha)=0\};\]
then $|B_1|\leq d-2$ and for any $s\notin B_1$ and $b\in\overline{\F_q}$, the polynomial $f(T)+sT+b$ has no roots over $\overline{\F_q}$ of multiplicity $3$ or more.    

Define 
\[X_1:=V(\widetilde{f}(x,y)-f'(x))\subset\A^2\quad \text{and}\quad  X_2:=V(\widetilde{f'}(x,y))\subset\A^2.\]
By B\'{e}zout's theorem, there are at most $(d-1)(d-2)$ pairs $(\alpha,\beta)\in (X_1\cap X_2)(\overline{\F_q})$ with $\alpha\neq\beta$.  Let
\[B_2=\{-f'(\alpha)\ |\ (\alpha,\beta)\in (X_1\cap X_2)(\overline{\F_q})\ \text{for some}\ \beta\in\overline{\F_q},\beta\neq \alpha\};\] then
$|B_2|\leq (d-1)(d-2).$ Suppose $\alpha\neq\beta$ in $\overline{\F_q}$ are both roots of multiplicity at least $2$ of some polynomial $f(T)+sT+b$ with $s,b\in\overline{\F_q}$. Explicitly, 
$f(\alpha)+s\alpha+b=f(\beta)+s\beta+b=0$ and $f'(\alpha)+s=f'(\beta)+s=0$. These imply 
$(\alpha,\beta)\in (X_1\cap X_2)(\overline{\F_q})$, hence $s\in B_2$. 

The set $B:=B_1\cup B_2$ satisfies $|B|\leq d^2-2d$. Let $s\notin B$. Choose $\alpha\in\overline{\F_q}$ such that $f'(\alpha)+s=0$ and set $b:=-f(\alpha)-s\alpha$. The polynomial $f(T)+sT+b$ satisfies the requirement. 
\end{proof}

\begin{proof}[Proof of Theorem \ref{main}.]
For any $s\in\F_q$, define
\[\Omega_s:=\{(t,b)\in\A^2\ |\ f(t)+st+b=0 \}.\]
The projection $\Omega_s\to\A^1_t$, $(t,b)\mapsto t$ is an isomorphism and the map 
$\varphi_s\colon\Omega_s\to\A^1_b$, $(t,b)\mapsto b$ is a generically \'etale morphism of degree $d$ between geometrically irreducible $\F_q$-varieties. 

The polynomial
$f(T)$ is not affine linearized, since
$\deg f'\geq 1$. Combining Lemma \ref{perturb} and Lemma \ref{adjust_poly_prescribed_fact_pattern}, there exists a set $B\subset\F_q$ of cardinality at most $d^2-d-1$ such that for any $s\in\F_q-B$, the following hold:
\begin{itemize}
\item[(i)] the polynomial $\widetilde{f}(x,y)+s$ is geometrically irreducible, and
\item[(ii)] there exists a $b\in\overline{\F_q}$ such that the polynomial $f(T)+sT+b$ has a unique root
of multiplicity $2$ and $d-2$ simple roots over $\overline{\F_q}$. 
\end{itemize} 

Let $s\in\F_q-B$.
By (ii), the fiber of
$\varphi_s$ over some $b\in\overline{\F_q}$  consists of $d-1$ points
over $\overline{\F_q}$, with $\varphi_s$ being \'etale at $d-2$ of them. Thus the assumption of Proposition 3 in \cite{Ballico_Hefez} is satisfied. 
Moreover, the complement of the diagonal in 
$\Omega_s\times_{\A^1,\varphi_s}\Omega_s$ is isomorphic to 
\[\Delta^c:=\{(x,y)\in\A^2\ |\ \widetilde{f}(x,y)+s=0, x\neq y\}.\]
It is nonempty because we can pick a $\beta\in\overline{\F_q}$ such that
$f'(\beta)+s\neq 0$, set $b:=-f(\beta)-s\beta$ (so $\beta$ is a simple root of $f(T)+sT+b$), let $\gamma$ be any other root of $f(T)+sT+b$ (note: $\deg f\geq 2$, since $\deg f'\geq 1$) and observe that 
$(\beta,\gamma)\in \Delta^c$. Thus $\Delta^c$ is a nonempty open subset of
$V(\widetilde{f}(x,y)+s)$
and by (i) is geometrically irreducible. Therefore the assumption of Proposition 2 in \cite{Ballico_Hefez} is satisfied as well. We conclude that the geometric monodromy group of the map $\varphi_s$ is the full $S_d$. 

Let $U$ be a dense open subset of $\A^1_b$ such that $\varphi_s|_{\varphi_s^{-1}(U)}\colon\varphi_s^{-1}(U)\to U$ is finite and \'etale. The statement now follows from Theorem 3 in \cite{Entin}, which is a version of the Chebotarev density theorem for function fields.
\end{proof}

\begin{Rem}  We can also deduce Corollary \ref{mainCor} directly from the criterion in \cite{Ballico_Hefez}, without going through Theorem \ref{main}. Namely, consider
\[\Omega:=\{(t,s,b)\in\A^3\ |\ f(t)+st+b=0\}\]
and $\varphi\colon\Omega\to\A^2_{s,b}.$ If $\Delta$ and $\Delta'$ denote the diagonals of $\Omega\times_{\A^2_{s,b}}\Omega$ and $\A^1_t\times\A^1_t$ respectively, then the map
$\Omega\times_{\A^2_{s,b}}\Omega-\Delta\to\A^1_t\times\A^1_t-\Delta'$
is an isomorphism, so the source is geometrically irreducible. The existence of 
$(s,b)\in\A^2(\overline{\F_q})$ such that $f(T)+sT+b$ has a unique root of multiplicity $2$ and $d-2$ simple roots over 
$\overline{\F_q}$ follows from Lemma \ref{adjust_poly_prescribed_fact_pattern}. 
\end{Rem}

\subsection*{Acknowledgments} 
I thank Bjorn Poonen and Alexei Entin for comments.

\end{document}